\documentclass[12pt,a4paper,reqno]{amsart} 
\pagestyle{plain}
\usepackage{amssymb}
\usepackage{latexsym}
\usepackage{amsmath}
\usepackage{mathrsfs}
\usepackage{calc}                         

\newcommand{\norm}[1]{\Vert#1\Vert }

\setcounter{section}{\value{section}}   


\newtheorem{thm}{Theorem}

\newtheorem{cor}[thm]{Corollary}
\newtheorem{lem}[thm]{Lemma}


\theoremstyle{definition}

\theoremstyle{remark}

\newtheorem{rem}[thm]{Remark}

\title[]{Unconditionally convergent multipliers and Bessel sequences}
\author{Carmen Fern\'{a}ndez, Antonio Galbis and Eva Primo}

\begin{document}

\maketitle

\begin{abstract}
We prove that every unconditionally summable sequence in a Hilbert space can be factorized as the product of a square summable scalar sequence and a Bessel sequence. Some consequences on the representation of unconditionally convergent multipliers are obtained, thus providing positive answers to a conjecture by Balazs and Stoeva in some particular cases.
\end{abstract}

\section{Introduction}
A {\it multiplier} on a separable Hilbert space $H$ is a bounded operator
$$
M_{m,\Phi,\Psi}:H\to H,\ f\mapsto\sum_{n=1}^\infty m_n\left<f, \Psi_n\right> \Phi_n,
$$ where $\Phi = \left(\Phi_n\right)_n$ and $\Psi = \left(\Psi_n\right)_n$ are sequences in $H$ and $m = (m_n)_n$ is a scalar sequence called the symbol. The multiplier is said to be unconditionally convergent if the above series converges unconditionally for every $f\in H.$ For any (unconditionally convergent) multiplier $M_{m,\Phi,\Psi}$ its adjoint $M_{\overline{m},\Psi,\Phi}$ is also a (unconditionally convergent) multiplier.
\par\medskip
Observe that each bounded operator $T$ on $H$ can be expressed as a multiplier: if $\left(u_n\right)_n$ is an orthonormal basis, we can take  $\Phi_n=Tu_n,$ $\Psi_n=u_n $ (alternatively  $\Phi_n=u_n,$ $\Psi_n=T^\ast u_n $) and $m_n=1$ for each $n\in {\mathbb N}.$
\par\medskip
In the case that $\Phi = \left(\Phi_n\right)_n$ and $\Psi = \left(\Psi_n\right)_n$ are Bessel sequences in $H$ and $m\in \ell^\infty$ the operator $M_{m,\Phi,\Psi}$ is called a Bessel multiplier. Recall that $\Psi = \left(\Psi_n\right)_n$ is called a {\it Bessel sequence} if there is a constant $B > 0$ such that
$$
\sum_{n=1}^\infty\left|\left<f, \Psi_n\right>\right|^2 \leq B \|f\|^2
$$ for every $f\in H.$ It turns out that $\left(\Psi_n\right)_n$ is a Bessel sequence if and only if there exists a bounded operator $T:\ell^2\to H$ such that $T(e_n) = \Psi_n,$ where $\left(e_n\right)_n$ denote the canonical unit vectors of $\ell^2$ (\cite[Theorem 3.2.3]{chris}).
\par\medskip
Bessel multipliers were introduced and studied in a systematic way by Balazs \cite{balazs} as a generalization of the Gabor multipliers considered in \cite{fn}. In \cite{balazs} it is proved that each Bessel multiplier is unconditionally convergent. Balazs and Stoeva \cite{sb2} provide examples of non-Bessel sequences and non-bounded symbols defining unconditionally convergent multipliers. However all the examples are obtained from a Bessel multiplier after some trick. In fact, Balazs and Stoeva {\it conjecture} in \cite{sb} that every unconditionally convergent multiplier can be written as a Bessel multiplier with constant symbol by shifting weights. More precisely, if $M_{m,\Phi,\Psi}:H\to H$ is an unconditionally convergent multiplier, then they conjecture that there exist scalar sequences $(a_n)
_n,\
(b_n)_n$ such
that
 $$
 m_n = a_n\cdot \overline{b}_n
 $$ and
 $$
 \left(a_n \Phi_n\right)_n,\ \left(b_n \Psi_n\right)_n
 $$ are  Bessel sequences  in $H.$ Several classes of multipliers for which the conjecture is true are obtained in \cite{sb}.
\par\medskip
In the particular case that $m_n = 1$ and $\Psi_n = g$ for every $n\in {\mathbb N},$ the conjecture has a positive answer if and only if for every unconditionally summable sequence $(\Phi_n)_n$ in a separable Hilbert space $H$ we may find $(\alpha_n)_n\in \ell^2$ such that $(\frac{1}{\alpha_n }\Phi_n)_n$ is a Bessel sequence in $H.$ So, the main aim of the present paper is to analyze the structure of unconditionally summable sequences in a separable Hilbert space. As a consequence we obtain some new situations where the conjecture of Balasz and Stoeva is still true, which are different in spirit to the ones considered in \cite{sb}.

\section{Results}

We will use the well known fact that a series $\sum_{n=1}^\infty x_n$ in a Banach space $X$ is unconditionally convergent if and only if there exist a compact operator $T:c_0\to X$ with the property that $T(e_n) = x_n,$ where $\left(e_n\right)_n$ denote the canonical unit vectors of $c_0$ (see for instance the omnibus theorem on unconditional summability in \cite[1.9]{djt}). We recall that, in the case that $X = H$ is a Hilbert space, every bounded operator $T:c_0\to H$ is compact. In fact, the closed unit ball $B$ of $H$ is weakly compact, the transposed map $T^\ast:H\to \ell^1$ is a bounded operator and weak and norm convergence of sequences in $\ell^1$ coincide (\cite[Theorem 1.7]{djt}). Therefore $T^\ast$ is a compact operator and so is $T.$
\par\medskip
From the previous considerations we conclude that a series $\sum_{n=1}^\infty x_n$ in a Hilbert space $H$ is unconditionally convergent if and only if there exists a bounded operator $T:c_0\to H$ with the property that $T(e_n) = x_n.$ An important consequence is the fact that the unconditionally convergence of $\sum_{n=1}^\infty x_n$ is equivalent to
$$
\sum_{n=1}^\infty\left|\left<x_n,g\right>\right| < \infty\ \ \forall g\in H.
$$ This is so because if the last condition is satisfied then, by closed graph theorem, $S:H\to \ell^1,$ $S(g):=\left(\left<x_n,g\right>\right)_n,$ defines a bounded operator and $T = S^\ast:\ell^\infty\to H$ verifies $T(e_n) = x_n.$
\par\medskip
For a fixed sequence $\alpha = (\alpha_n)_n$ the diagonal operator $D_\alpha$ acts on a sequence $x = (x_n)_n$ as
$$
 D_\alpha (x) = \left(\alpha_n x_n\right)_n.
 $$ If $\alpha\in\ell^2$ then $D_\alpha:\ell^\infty \to \ell^2$ is a bounded operator, while $D_\alpha:\ell^2 \to \ell^2$ whenever $\alpha\in c_0.$

 \par\medskip
  \begin{lem}\label{lem:reformulacion} The following statements are  equivalent:
 \begin{itemize}
  \item[(a)] Every unconditionally summable sequence  $\left(\Phi_n\right)_n$ in $H$ can be written as  $\Phi_n = \alpha_n f_n,$ where $(\alpha_n)_n\in \ell^2$ and  $\left(f_n\right)_n$ is a Bessel sequence in $H.$
  \item[(b)] Every bounded operator  $T:c_0\to H$ can be  factorized as  $$T = A \circ D_\alpha$$ where  $D_\alpha:c_0\to \ell^2$ is a diagonal operator and  $A:\ell^2\to H$ is a bounded operator.
 \end{itemize}
 \end{lem}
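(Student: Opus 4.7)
The plan is to translate both statements into the language of operators using the two correspondences that the paper has already established: (i) a sequence $(\Phi_n)_n$ in $H$ is unconditionally summable if and only if there is a bounded operator $T:c_0\to H$ with $T(e_n)=\Phi_n$, and (ii) a sequence $(f_n)_n$ in $H$ is a Bessel sequence if and only if there is a bounded operator $A:\ell^2\to H$ with $A(e_n)=f_n$. Once these dictionaries are fixed, the equivalence should reduce to the elementary observation that
\[
(A\circ D_\alpha)(e_n)=A(\alpha_n e_n)=\alpha_n A(e_n)=\alpha_n f_n.
\]

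For the direction (a)$\Rightarrow$(b), I would start with a bounded operator $T:c_0\to H$ and set $\Phi_n:=T(e_n)$. Since $T$ is bounded on $c_0$, the series $\sum \Phi_n$ is unconditionally convergent, so (a) furnishes $\alpha=(\alpha_n)_n\in\ell^2$ and a Bessel sequence $(f_n)_n$ with $\Phi_n=\alpha_n f_n$. Letting $A:\ell^2\to H$ be the bounded operator attached to $(f_n)_n$ by the Bessel dictionary, the computation above shows that $T$ and $A\circ D_\alpha$ coincide on each $e_n$, hence on the dense subspace of eventually zero sequences, hence on all of $c_0$ by continuity (note $D_\alpha:c_0\to\ell^2$ is bounded because $\alpha\in\ell^2$, as remarked in the paragraph preceding the lemma).

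For the direction (b)$\Rightarrow$(a), I would start with an unconditionally summable $(\Phi_n)_n$ in $H$, use the first dictionary to produce a bounded $T:c_0\to H$ with $T(e_n)=\Phi_n$, apply (b) to factor $T=A\circ D_\alpha$ with $\alpha\in\ell^2$ and $A:\ell^2\to H$ bounded, and finally read off $f_n:=A(e_n)$, which is a Bessel sequence by the second dictionary. Evaluating at $e_n$ gives $\Phi_n=\alpha_n f_n$, which is exactly the conclusion of (a).

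The argument is essentially a bookkeeping exercise, so I do not anticipate a genuine obstacle. The only point that deserves a moment of care is the passage from equality on $(e_n)_n$ to equality as operators on $c_0$: this uses that the finitely supported sequences are dense in $c_0$, together with continuity of both $T$ and $A\circ D_\alpha$. Everything else is a direct application of the facts recalled just before the statement.
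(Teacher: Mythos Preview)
Your proposal is correct and follows essentially the same approach as the paper's proof: both directions use the two dictionaries (unconditional summability $\leftrightarrow$ bounded $T:c_0\to H$, Bessel sequence $\leftrightarrow$ bounded $A:\ell^2\to H$) together with the identity $(A\circ D_\alpha)(e_n)=\alpha_n A(e_n)$. Your write-up is in fact slightly more careful than the paper's, since you make explicit the density argument for passing from agreement on the $e_n$ to equality of operators on $c_0$.
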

\begin{proof}
 $(a)\Rightarrow (b).$ If $T:c_0\to H$ is bounded then  $\left(\Phi_n\right)_n = \left(T(e_n)\right)_n$  is unconditionally summable (\cite[Theorem 1.9]{djt}), hence $T(e_n) = \alpha_n f_n,$ where $\alpha = (\alpha_n)_n\in \ell^2$ and $\left(f_n\right)_n$ is a Bessel sequence in $H.$ Therefore $\left(f_n\right)_n$ defines a bounded operator  $A:\ell^2 \to H,\ A(\beta) = \sum_n \beta_n f_n,$ and  $T = A\circ D_\alpha.$
 \par
 $(b)\Rightarrow (a).$ Let $\left(\Phi_n\right)_n$ be an unconditionally summable sequence in $H.$ Then there is a bounded operator $T:c_0\to H$ such that $T(e_n) = \Phi_n$ and, by hypothesis, it can be factorized as $T = A \circ D_\alpha$ where $\alpha \in \ell^2$ and $A:\ell^2\to H$ is a bounded operator. Then $\left(f_n\right)_n:= \left(A(e_n)\right)_n$ is a  Bessel sequence in  $H$ and clearly $\Phi_n = A\left(\alpha_n e_n\right) = \alpha_n f_n.$ $\Box$
 \end{proof}

\begin{rem} In the same spirit, the conjecture in \cite{sb} has a positive answer for a given unconditionally convergent  multiplier  $M_{m,\Phi,\Psi}$   on $\ell^2$ if and only if
the continuous  bilinear map
$$
T: c_0 \times \ell^2\to\ell^2,\ \ (\alpha, f)\mapsto \sum_{n=1}^\infty \alpha_n m_n \left< f, \Psi_n\right>\Phi_n,
$$ admits a factorization $$T = B \circ D$$ where $B:\ell^2\to \ell^2$ is a bounded operator and
$D:c_0 \times \ell^2\to \ell^2$ is a continuous bilinear map such that  for every $f \in \ell^2,$ $D(\cdot, f):c_0 \to \ell^2$ is a diagonal operator.

\end{rem}

 We recall that any bounded operator $B:c_0\to \ell^\infty,\ B(e_j) = \left(b^i_j\right)_i,$ has the property that $b^i:= \left(b^i_j\right)_j \in \ell^1$ for every $i\in {\mathbb N}$ and $$
 \|B\| = \sup_i \|b^i\|_{\ell^1}.
 $$

The next result can be viewed as an improvement of Orlicz's Theorem (see for instance \cite[Theorem 1.11]{djt} or \cite[Theorem 3.16]{h}) that every unconditionally summable sequence in a Hilbert space is absolutely 2-summable. It is the main result of the paper.

 \begin{thm}\label{th:sequence} Every unconditionally summable sequence  $\left(\Phi_n\right)_n$ in a separable Hilbert space  $H$ can be expressed as  $\Phi_n = \overline{a}_n f_n,$ where $(a_n)_n\in \ell^2$ and $\left(f_n\right)_n$ is a  Bessel sequence in $H.$
 \end{thm}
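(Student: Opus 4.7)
By Lemma~\ref{lem:reformulacion} the theorem is equivalent to showing that every bounded $T\colon c_0\to H$, $T(e_n)=\Phi_n$, factors as $T=A\circ D_\alpha$ with $\alpha\in\ell^2$ and $A\colon\ell^2\to H$ bounded. Given such a factorisation, one sets $f_n=A(e_n)$ (a Bessel sequence, because $A$ is bounded) and $a_n=\overline{\alpha_n}$, and obtains $\Phi_n=\overline{a_n}f_n$ with $(a_n)\in\ell^2$, as required.

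The central tool is the classical fact that every bounded linear operator from a $C(K)$ space to a Hilbert space is $2$-summing (\emph{little Grothendieck}). I would bring $T$ into this framework by extending it to the one-point compactification $K=\mathbb N\cup\{\infty\}$: put $\tilde T\colon C(K)\to H$, $\tilde T(f)=T\bigl(f-f(\infty)\cdot\mathbf 1\bigr)$. This $\tilde T$ is bounded, agrees with $T$ on $c_0$, and hence is $2$-summing. The Pietsch domination theorem for $C(K)$-spaces then yields a probability measure $\mu$ on $K$ and a constant $C>0$ with
\[
\|\tilde T f\|^2\leq C^2\int_K |f|^2\,d\mu,\qquad f\in C(K).
\]
Since $K$ is countable, $\mu=\sum_{n\in\mathbb N}\mu_n\delta_n+\mu_\infty\delta_\infty$ with non-negative masses summing to $1$.

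The key observation is that every $x\in c_0$ satisfies $x(\infty)=0$, so the atom at $\infty$ drops out of the integral and one obtains the \emph{diagonal} Pietsch inequality
\[
\|Tx\|^2\leq C^2\sum_{n\in\mathbb N}\mu_n|x_n|^2,\qquad x\in c_0,
\]
with $\sum_n\mu_n\leq 1$. Set $\alpha_n=\sqrt{\mu_n}$, so that $\alpha\in\ell^2$. The displayed inequality shows that $T$ extends by density to a bounded operator $\bar T\colon\ell^2(\mu)\to H$; composing with the isometric isomorphism $\ell^2\to\ell^2(\mu)$, $y\mapsto(y_n/\sqrt{\mu_n})_n$, produces a bounded $A\colon\ell^2\to H$ with $T=A\circ D_\alpha$. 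Indices where $\mu_n=0$ force $\|\Phi_n\|^2\leq C^2\mu_n=0$ and are handled by declaring $f_n=0$. Lemma~\ref{lem:reformulacion} now yields $\Phi_n=\alpha_n f_n$, and taking $a_n=\alpha_n$ (real and square summable) finishes the proof.

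The main obstacle is not the final assembly of $A$ and $D_\alpha$, which is almost formal, but rather the localisation of the Pietsch measure on $\mathbb N$. Applied directly to $T\colon c_0\to H$, the Pietsch theorem produces a measure $\nu$ on the dual unit ball $B_{\ell^1}$, and the associated Gram operator $G_{nm}=\int y_n\overline{y_m}\,d\nu(y)$ has off-diagonal entries that in general rule out a diagonal factorisation. Extending $T$ to $C(\mathbb N\cup\{\infty\})$ and exploiting the fact that elements of $c_0$ vanish at infinity is the clean device that removes these cross terms and produces the diagonal inequality needed by the lemma.
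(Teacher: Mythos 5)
Your proof is correct, and it takes a genuinely different route from the one in the paper. The paper also reduces to the factorization $T=A\circ D_\alpha$ via Lemma~\ref{lem:reformulacion}, but then invokes $2$-integrality and $2$-nuclearity of $T$ to write $T=S\circ D_\lambda\circ B$ with $B\colon c_0\to\ell^\infty$, $\lambda\in\ell^2$, $S\colon\ell^2\to H$, and the real work is a hands-on Cauchy--Schwarz computation converting the non-diagonal piece $D_\lambda\circ B$ into $\tilde A\circ D_\alpha$ with $|\alpha_k|^2=\sum_i|\lambda_i|^2|b^i_k|$. You instead go through the little Grothendieck theorem and the Pietsch domination theorem in its $C(K)$ form, where the dominating measure may be taken on $K$ itself rather than on $B_{C(K)^*}$; your observation that for $K=\mathbb N\cup\{\infty\}$ the measure is purely atomic and that the atom at $\infty$ is invisible to elements of $c_0$ turns the Pietsch inequality directly into the diagonal estimate $\|Tx\|^2\le C^2\sum_n\mu_n|x_n|^2$, after which the factorization is immediate (your care with the indices where $\mu_n=0$, and with the well-definedness of $A$ on the closure of $D_\alpha(c_0)$, is exactly what is needed). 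Both arguments ultimately rest on the same Grothendieck-type input, but yours is shorter and more conceptual, replacing the paper's explicit matrix computation with the structural fact that Borel measures on countable compacta are atomic; it also yields the quantitative bonus $\sum_n\|\Phi_n\|^2\le C^2$, recovering Orlicz's theorem along the way. The paper's computation, on the other hand, is self-contained once the $2$-nuclear factorization is granted and produces the sequence $\alpha$ by an explicit formula in terms of the factorization data.
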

    \begin{proof}
    By Lemma \ref{lem:reformulacion} it is enough to show that every bounded operator $T:c_0\to H$ can be factorized as $T=A\circ D_\alpha,$ where $\alpha \in \ell^2$ and $A:\ell^2\to H$ is a bounded operator. According to  \cite[3.7 and 5.9]{djt}, $T$ is a 2-integral operator, hence it is 2-nuclear (\cite[Theorem 5]{p}). Therefore there are bounded operators $B:c_0\to \ell^\infty,$ $S:\ell^2\to H$ and $\lambda \in \ell^2$ such that $T = S\circ D_\lambda\circ B$ \cite[Theorem 19.7.4]{jarchow}. To finish it suffices to find $\alpha \in \ell^2$ and a bounded operator $\tilde{A}$ on $\ell^2$ such that
 $D_\lambda\circ B=\tilde{A}\circ D_\alpha,$ since then
    $$
    T =A\circ D_\alpha,
    $$ with $A=S\circ \tilde{A}.$ \par\medskip
As  $D_\lambda\circ B = D_{t\lambda}\circ (t^{-1}B)$ for each $t > 0,$ without loss of generality we can assume $\|B\| = 1.$ We denote $B(e_j) = \left(b^i_j\right)_i$ and $b^i:= \left(b^i_j\right)_j \in \ell^1.$ We define  $\alpha = (\alpha_k)_k$ such that
    $$
    |\alpha_k|^2 := \sum_{i=1}^\infty |\lambda_i|^2\cdot |b^i_k|.
    $$ Then
\begin{equation}\label{eq:alphak}
\sum_{k=1}^\infty |\alpha_k|^2 = \sum_{i=1}^\infty |\lambda_i|^2\cdot \|b^i\|_{\ell^1} \leq \|\lambda\|_{\ell^2}^2,
\end{equation} hence $\alpha \in \ell^2.$ Next, we consider
    $$
    f_k:= \frac{1}{\alpha_k}\left(\lambda_i b^i_k\right)_i,\ \ k\in {\mathbb N}.
    $$ Since $|b^i_k|^2\leq |b^i_k|,$ the inequality (\ref{eq:alphak}) implies that $f_k\in \ell^2. $ To finish the proof, we have to show that there is a bounded operator $\tilde{A}$ on $\ell^2$ such that $\tilde{A}(e_k) = f_k,$ that is, $\left(f_k\right)_k$ is a Bessel sequence in $\ell^2.$ To this end, we fix $\beta = (\beta_k)_k\in \ell^2$ and  $\gamma = (\gamma_k)_k\in \ell^2.$ Then,
   $$
    \sum_{k=1}^N \left|\beta_k\left<f_k, \gamma\right>\right| \leq  \sum_{k=1}^N \frac{|\beta_k|}{|\alpha_k|}\sum_{j= 1}^\infty|\lambda_j \gamma_j|\cdot |b^j_k|
    $$ for all $N\in {\mathbb N}.$  As $\lambda, \, \gamma \in \ell^2,$
    $$
    \begin{array}{ll}
     \begin{displaystyle}
      \sum_{j= 1}^\infty|\lambda_j \gamma_j|\cdot |b^j_k|
     \end{displaystyle} & \begin{displaystyle} \leq \left(\sum_{j= 1}^\infty|\lambda_j|^2\cdot |b^j_k|\right)^{\frac{1}{2}}\cdot \left(\sum_{j= 1}^\infty|\gamma_j|^2\cdot |b^j_k|\right)^{\frac{1}{2}}\end{displaystyle}\\ & \\ & = \begin{displaystyle}
     |\alpha_k|\cdot \left(\sum_{j= 1}^\infty |\gamma_j|^2\cdot|b^j_k|\right)^{\frac{1}{2}}.\end{displaystyle}
         \end{array}
    $$ Moreover
    $$
          \sum_{k=1}^\infty\sum_{j=1}^\infty |\gamma_j|^2\cdot|b^j_k|
          = \sum_{j=1}^\infty |\gamma_j|^2\cdot \|b^j\|_{\ell^1} \leq \|\gamma\|_{\ell^2}^2.
    $$ This means that
    $$
    \left(\left(\sum_{j= 1}^\infty|\gamma_j|^2\cdot |b^j_k|\right)^{\frac{1}{2}}\right)_{k\in {\mathbb N}}\in \ell^2.
    $$ Hence,
    $$
    \sum_{k=1}^\infty\left|\beta_k\left<f_k, \gamma\right>\right|
    $$ is less than or equal to
    $$
    \sum_{k=1}^\infty |\beta_k|\cdot \left(\sum_{j= 1}^\infty|\gamma_j|^2\cdot |b^j_k|\right)^{\frac{1}{2}} < \infty.
    $$ Since this holds for every  $\beta\in \ell^2$ we conclude that
    $$
    \sum_{k=1}^\infty\left|\left<f_k, \gamma\right>\right|^2 < \infty
    $$ for every $\gamma\in \ell^2.$ Now, the closed graph theorem gives the conclusion. $\Box$

    \end{proof}

Theorem \ref{th:sequence} gives a positive answer to the conjecture when $(\Psi_n)_n$ is a constant sequence.  Next we consider a  more general situation.

\begin{cor}\label{cor:without-accum} Let $M_{m,\Phi,\Psi}$ be an unconditionally convergent multiplier and assume that $0$ is not a weak accumulation point of the sequence $\left(\frac{\Psi_n}{\|\Psi_n\|}\right)_n.$ Then there exist scalar sequences $(a_n)_n,\ (b_n)_n$ such
that $m_n = a_n\cdot \overline{b}_n
 $ and
 $
 \left(a_n \Phi_n\right)_n,\ \left(b_n \Psi_n\right)_n
 $ are  Bessel sequences  in $H.$
 \end{cor}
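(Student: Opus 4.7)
The plan is to reduce the corollary to Theorem \ref{th:sequence} by partitioning $\mathbb{N}$ into finitely many blocks on each of which the direction of $\Psi_n$ is detected by a single test vector. Since $0$ is not a weak accumulation point of $(\Psi_n/\|\Psi_n\|)_n$, some basic weak neighbourhood of $0$ contains only finitely many of those terms, giving $g_1,\ldots,g_k\in H$, $\varepsilon>0$ and a finite set $J_0\subset\mathbb{N}$ such that for each $n\notin J_0$ some index $i(n)\in\{1,\ldots,k\}$ satisfies
$$|\langle\Psi_n,g_{i(n)}\rangle|\ge \varepsilon\|\Psi_n\|.$$
I would then split $\mathbb{N}\setminus J_0$ into disjoint blocks $J_1,\ldots,J_k$ according to the value of $i(n)$.

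For each $i\in\{1,\ldots,k\}$, the sequence $(m_n\langle g_i,\Psi_n\rangle\Phi_n)_{n\in J_i}$ is unconditionally summable in $H$: applying $M_{m,\Phi,\Psi}$ to $f=g_i$ yields unconditional convergence of $\sum_n m_n\langle g_i,\Psi_n\rangle\Phi_n$, and subseries of unconditionally convergent series remain unconditionally convergent. Theorem \ref{th:sequence} then provides $(a_n^{(i)})_{n\in J_i}\in\ell^2$ and a Bessel sequence $(f_n^{(i)})_{n\in J_i}$ in $H$ with
$$m_n\langle g_i,\Psi_n\rangle\Phi_n=\overline{a_n^{(i)}}\,f_n^{(i)}\qquad (n\in J_i).$$

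From these factorizations I would set, for $n\in J_i$ with $i\ge 1$,
$$a_n:=\frac{m_n\langle g_i,\Psi_n\rangle}{\overline{a_n^{(i)}}},\qquad b_n:=\frac{a_n^{(i)}}{\langle\Psi_n,g_i\rangle},$$
and on the finite set $J_0$ simply $a_n:=m_n$, $b_n:=1$. A direct computation gives $a_n\overline{b}_n=m_n$ and $a_n\Phi_n=f_n^{(i)}$, so $(a_n\Phi_n)_n$ decomposes as a disjoint union of $k$ Bessel sequences plus a finite tail, hence is Bessel. For the second sequence, the chosen lower bound gives $\|b_n\Psi_n\|\le\varepsilon^{-1}|a_n^{(i)}|$ on $J_i$, so $\sum_n\|b_n\Psi_n\|^2<\infty$, which forces $(b_n\Psi_n)_n$ to be Bessel.

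Minor degeneracies (indices with $\Phi_n=0$, $m_n=0$ or $\Psi_n=0$) can be absorbed either by enlarging $J_0$ or by arranging the factorization of Theorem \ref{th:sequence} so that $a_n^{(i)}\ne0$ throughout; I do not expect any real obstruction beyond this bookkeeping, since the whole argument is simply a finite block decomposition of the rank-one case $\Psi_n\equiv g$ treated in the main theorem.
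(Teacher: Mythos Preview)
Your argument is correct and rests on the same idea as the paper's: use the weak-neighbourhood hypothesis to produce finitely many test vectors $g_1,\ldots,g_k$ detecting the directions $\Psi_n/\|\Psi_n\|$, then reduce to Theorem~\ref{th:sequence}. The only real difference is in the packaging. You partition $\mathbb{N}$ into blocks $J_1,\ldots,J_k$ and apply Theorem~\ref{th:sequence} separately on each block; the paper instead observes that summing over the test vectors directly gives
\[
\sum_{n}|m_n|\,\|\Psi_n\|\,|\langle\Phi_n,g\rangle|\;\le\;\sum_{k}\sum_n|m_n\langle f_k,\Psi_n\rangle\langle\Phi_n,g\rangle|<\infty\qquad(g\in H),
\]
so the \emph{single} sequence $(m_n\|\Psi_n\|\Phi_n)_n$ is unconditionally summable and Theorem~\ref{th:sequence} need only be invoked once, with $b_n\Psi_n = b_n\Psi_n/\|\Psi_n\|$ automatically Bessel since $(b_n)\in\ell^2$. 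This sidesteps the block bookkeeping and the attendant degeneracy checks (your formulas require $a_n^{(i)}\ne 0$ on $J_i$), although your remark that these can always be arranged is correct. Conversely, your version has the merit of making the reduction to the constant-$\Psi$ case completely transparent.
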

\begin{proof}
 In fact, our hypothesis implies the existence of finitely many elements $f_1, \ldots, f_K\in H$ with the property that
$$
\sum_{k=1}^K\left|\left<f_k,\frac{\Psi_n}{\|\Psi_n\|}\right>\right| \geq 1
$$ for every $n\in {\mathbb N}.$ Since $M_{m,\Phi,\Psi}$ is an unconditionally convergent multiplier we have
$$
\sum_{n=1}^\infty\left|\left<f, \Psi_n\right>\right|\cdot \left|\left<\Phi_n, g\right>\right| < \infty
$$ for every $f,g\in H.$ Consequently
$$
\sum_{n=1}^\infty\left|m_n\right|\cdot\norm{\Psi_n}\cdot\left|\left<\Phi_n, g\right>\right| \leq \sum_{k=1}^K\sum_{n=1}^\infty\left|\left<f_k, \Psi_n\right>\right|\cdot \left|\left<\Phi_n, g\right>\right| < \infty
$$ for every $g\in H.$ It follows that the series $\sum_{n=1}^\infty m_n \|\Psi_n\|\Phi_n$ is unconditionally convergent and we can apply Theorem \ref{th:sequence} to find a sequence $(b_n)_n\in \ell^2$ such that $\left(\frac{m_n}{b_n}\|\Psi_n\|\Phi_n\right)_n$ is a Bessel sequence. Since also $\left(b_n \frac{\Psi_n}{\|\Psi_n\|}\right)_n$ is a Bessel sequence, the conclusion follows. 
\end{proof}
By Orlicz's Theorem, in the case that $(\Psi_n)_n$ is constant, the unconditional convergence of the series $\begin{displaystyle}\sum_{n=1}^\infty m_n\left<f, \Psi_n\right> \Phi_n\end{displaystyle}$ implies that $$
  \sum_{n=1}^\infty (|m_n|\|\Phi_n\|\|\Psi_n\|)^2 < \infty.$$ In particular, the sequence $\left(m_n\cdot \|\Phi_n\|\cdot \|\Psi_n\|\right)_n$ converges to zero and \cite[Proposition 1.1]{sb} cannot be applied.
Obviously, in  Corollary \ref{cor:without-accum}, the condition on the sequence $(\Psi_n)_n$ can be replaced by a similar condition on $(\Phi_n)_n.$  The following result shows that the conjecture stated by Balazs and Stoeva in \cite{sb} holds under the stronger hypothesis of absolute convergence of the series. The proof depends on Theorem \ref{th:sequence} and it does not follow from the results in \cite{sb}.

\begin{thm}\label{th:absolute} Let $M_{m,\Phi,\Psi}$ be  such that for each  $f\in H,$ the series
$$
\sum_{n=1}^\infty m_n\left<f, \Psi_n\right> \Phi_n$$ converges absolutely in $H.$ Then there exist scalar sequences $(a_n)_n$ and $(b_n)_n$ such that $m_n = a_n\cdot \overline{b_n},$ and $\left( b_n \Psi_n \right)_n$  and $\left( a_n \Phi_n \right)_n$ are   Bessel sequences in $H. $

\end{thm}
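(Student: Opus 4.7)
My strategy is to reduce the theorem to Theorem~\ref{th:sequence} by identifying an auxiliary sequence that is unconditionally summable whenever the hypothesis holds. The starting observation is that absolute convergence of $\sum_{n=1}^\infty m_n\langle f,\Psi_n\rangle \Phi_n$ for every $f\in H$ is precisely
$$
\sum_{n=1}^\infty |m_n|\cdot \|\Phi_n\|\cdot |\langle f,\Psi_n\rangle| < \infty \qquad \forall\, f\in H.
$$
Since $\langle f,\overline{m_n}\|\Phi_n\|\Psi_n\rangle = m_n\|\Phi_n\|\langle f,\Psi_n\rangle$, the characterization of unconditional convergence in a Hilbert space recalled just before Lemma~\ref{lem:reformulacion} shows that $(\overline{m_n}\|\Phi_n\|\Psi_n)_n$ is unconditionally summable in $H$.

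I would then apply Theorem~\ref{th:sequence} to this sequence, obtaining $(c_n)_n\in\ell^2$ and a Bessel sequence $(h_n)_n$ in $H$ with $\overline{m_n}\|\Phi_n\|\Psi_n=\overline{c_n}\,h_n$ for every $n$. Assuming for the moment that $\Phi_n\neq 0$ and $\Psi_n\neq 0$ for all $n$, I would set
$$
a_n:=\frac{c_n}{\|\Phi_n\|},\qquad b_n:=\frac{\overline{m_n}\|\Phi_n\|}{\overline{c_n}},
$$
with the convention $a_n=b_n=0$ when $c_n=0$. Note that $c_n=0$ forces $\overline{m_n}\|\Phi_n\|\Psi_n=0$ and hence $m_n=0$, so the identity $m_n=a_n\overline{b_n}$ remains valid.

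A direct computation gives $a_n\overline{b_n}=m_n$, and substituting $\Psi_n=\overline{c_n}h_n/(\overline{m_n}\|\Phi_n\|)$ yields $b_n\Psi_n=h_n$; hence $(b_n\Psi_n)_n$ is Bessel. For $(a_n\Phi_n)_n=(c_n\Phi_n/\|\Phi_n\|)_n$, the Cauchy--Schwarz inequality gives $|\langle f,a_n\Phi_n\rangle|^2\leq |c_n|^2\|f\|^2$ at each $n$, and summing yields $\sum_n |\langle f,a_n\Phi_n\rangle|^2\leq \|c\|_{\ell^2}^2\|f\|^2$, so $(a_n\Phi_n)_n$ is Bessel as well. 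Indices where $\Phi_n$ or $\Psi_n$ vanishes are handled separately: at such $n$ one of the products $a_n\Phi_n$ or $b_n\Psi_n$ is automatically zero, and one can pick the remaining factor to satisfy $a_n\overline{b_n}=m_n$ while forcing $\|a_n\Phi_n\|$ or $\|b_n\Psi_n\|$ to be at most $2^{-n}$, which preserves both Bessel estimates.

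The main conceptual step is spotting the correct auxiliary sequence $(\overline{m_n}\|\Phi_n\|\Psi_n)_n$ that encodes the absolute-convergence hypothesis as unconditional summability; once this reformulation is made, Theorem~\ref{th:sequence} does the heavy lifting and the remainder of the argument reduces to arithmetic and a single application of Cauchy--Schwarz.
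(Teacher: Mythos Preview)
Your proof is correct and follows essentially the same route as the paper: the paper first normalizes so that $\|\Phi_n\|=1$ (which is exactly your explicit insertion of the factor $\|\Phi_n\|$), observes that $(\overline{m_n}\Psi_n)_n$ is then unconditionally summable, applies Theorem~\ref{th:sequence} to obtain $(c_n)\in\ell^2$ with $(\overline{m_n}\Psi_n/c_n)_n$ Bessel, and notes that $(\overline{c_n}\Phi_n)_n$ is trivially Bessel since $\|\Phi_n\|=1$. Your treatment of the degenerate indices is more explicit than the paper's, but the argument is otherwise the same.
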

\begin{proof}
Without loss of generality, we may assume that $ ||\Phi_n||=1$ for every $n\in {\mathbb N}.$
 The condition $(m_n\left< f, \Psi_n \right>)_n \in \ell^1$ for every $f\in H$ implies that the sequence $(\overline{m_n} \Psi_n)_n$ is unconditionally summable in $H$, therefore by Theorem \ref{th:sequence}, there is $(c_n)_n \in \ell^2$ such that  $\left( \frac{\overline{m_n}}{c_n} \Psi_n \right)_n$ is a Bessel sequence. As $\left( \overline{c_n}\Phi_n \right)_n$ is also a Bessel sequence, we conclude. $\Box$
\end{proof}

As a consequence of the previous result, it is easy to see that the  absolute convergence of  $\sum_{n=1}^\infty m_n\left<f, \Psi_n\right> \Phi_n$ (for every $f$ in $H$)  implies  the unconditional convergence of the series $$
\sum_{n=1}^\infty m_n \Psi_n\otimes \Phi_n
$$ in the Hilbert space $S^2(H)$ of Hilbert-Schmidt operators on $H.$

Let $B_H$ denote the closed unit ball of $H$ endowed with the weak topology and $\mu$ a probability Borel measure on $B_H.$ Then we have the canonical continuous inclusion
$$
j_\mu:H\to L^2(B_H,\mu),\ j_\mu(f)\left(g\right):=\left<f,g\right>.
$$

\begin{thm} Let $B_H$ denote the closed unit ball of $H$ endowed with the weak topology and assume that the multiplier $M_{m,\Phi,\Psi}$ has the additional property that the series
$$
\sum_{n=1}^\infty m_n \Psi_n\otimes \Phi_n
$$ converges unconditionally in  $S^2(H).$  Then, for every probability Borel measure $\mu$ on $B_H$ there exist scalar sequences $(a_n)_n,\ (b_n)_n$ such that $m_n = a_n\cdot \overline{b}_n,$ $\left(a_n \Psi_n\right)_n$ is a Bessel sequence in $H$ and $\left(j_\mu(b_n \Phi_n)\right)_n$ is a Bessel sequence in $L^2(B_H,\mu).$ In particular
$$
\sum_{n=1}^\infty\left|\left<f, b_n \Phi_n\right>\right|^2 < \infty
$$ for $\mu$-almost every $f\in B_H.$
\end{thm}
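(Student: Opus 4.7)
The plan is to apply Theorem~\ref{th:sequence} in the separable Hilbert space $S^2(H)$ to the unconditionally summable sequence $(m_n\Psi_n\otimes\Phi_n)_n$, and then to extract the required factorization by integrating the resulting Bessel inequality over $B_H$ against $\mu$.

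First, Theorem~\ref{th:sequence} applied in $S^2(H)$ yields $\alpha=(\alpha_n)_n\in\ell^2$ and a Bessel sequence $(G_n)_n$ in $S^2(H)$ with $m_n\Psi_n\otimes\Phi_n=\overline{\alpha_n}G_n$. Because $\Psi_n\otimes\Phi_n$ is rank one, each $G_n$ is a scalar multiple of $\Psi_n\otimes\Phi_n$; write $G_n=\tau_n\,\Psi_n\otimes\Phi_n$ with $\tau_n\overline{\alpha_n}=m_n$. Testing the Bessel inequality for $(G_n)$ against rank-one elements $R=x\otimes y$ and using $|\langle x\otimes y,\Psi_n\otimes\Phi_n\rangle_{S^2}|=|\langle x,\Psi_n\rangle|\,|\langle y,\Phi_n\rangle|$, one obtains
\[
\sum_n |\tau_n|^2\,|\langle x,\Psi_n\rangle|^2\,|\langle y,\Phi_n\rangle|^2 \le C\,\|x\|^2\|y\|^2,\qquad x,y\in H.
\]
Integrating this in $y$ against $\mu$ (Fubini is legitimate by positivity, $\int_{B_H}|\langle y,\Phi_n\rangle|^2\,d\mu=\|j_\mu(\Phi_n)\|^2_{L^2(\mu)}$, and $\int\|y\|^2 d\mu\le 1$ since $\mu$ is a probability measure on $B_H$) produces the weighted Bessel inequality
\[
\sum_n |\tau_n|^2\,\|j_\mu(\Phi_n)\|^2\,|\langle x,\Psi_n\rangle|^2 \le C\,\|x\|^2.
\]

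Next, absorb the weight: on the indices where $\|j_\mu(\Phi_n)\|\neq 0$ set $a_n:=\tau_n\|j_\mu(\Phi_n)\|$ and $b_n:=\alpha_n/\|j_\mu(\Phi_n)\|$. Then $a_n\overline{b_n}=\tau_n\overline{\alpha_n}=m_n$, $(a_n\Psi_n)_n$ is Bessel in $H$ by the last display, and $b_n j_\mu(\Phi_n)=\alpha_n\cdot j_\mu(\Phi_n)/\|j_\mu(\Phi_n)\|$ has $L^2(\mu)$-norm exactly $|\alpha_n|$, so $(b_n j_\mu(\Phi_n))_n$ is Bessel in $L^2(\mu)$ with constant $\sum|\alpha_n|^2$. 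The at most countably many indices where $\|j_\mu(\Phi_n)\|=0$---equivalently $\langle\Phi_n,\cdot\rangle$ vanishes $\mu$-a.e.---are handled separately: $b_n j_\mu(\Phi_n)=0$ automatically, so pick $a_n$ small enough that $\sum_{\mathrm{bad}}|a_n|^2\|\Psi_n\|^2<\infty$ (which already forces Bessel for this sub-sequence via $|\langle x,\Psi_n\rangle|\le\|x\|\|\Psi_n\|$) and let $b_n:=\overline{m_n}/\overline{a_n}$. The \emph{in particular} clause then follows by monotone convergence, since $\sum_n\|b_n j_\mu(\Phi_n)\|^2_{L^2(\mu)}=\sum_{\mathrm{good}}|\alpha_n|^2<\infty$ forces $\sum_n|b_n|^2|\langle f,\Phi_n\rangle|^2<\infty$ for $\mu$-a.e.\ $f$, while each bad index contributes zero off a $\mu$-null set.

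The only substantive step is the integration trick that converts the joint rank-one Bessel condition in $S^2(H)$ into a weighted Bessel condition in $H$ with weight $\|j_\mu(\Phi_n)\|$, which is precisely what the embedding $j_\mu$ supplies on the $L^2(\mu)$-side; treating the indices on which the weight vanishes is routine book-keeping.
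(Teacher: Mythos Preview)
Your proof is correct and follows essentially the same route as the paper: apply Theorem~\ref{th:sequence} in $S^2(H)$, test the resulting Bessel inequality on rank-one operators to obtain the double estimate $\sum_n |m_n/\alpha_n|^2|\langle f,\Psi_n\rangle|^2|\langle g,\Phi_n\rangle|^2 \le C\|f\|^2\|g\|^2$, and integrate in $g$ against $\mu$ to define $a_n$ and then $b_n$. Your treatment is in fact slightly more careful than the paper's, which writes $b_n=\overline{m}_n/a_n$ without commenting on the indices where $a_n=0$ (equivalently where $m_n=0$ or $\|j_\mu(\Phi_n)\|=0$); your explicit bookkeeping for those ``bad'' indices fills that small gap.
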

\begin{proof}
In fact, according to Theorem \ref{th:sequence} there is $(\alpha_n)_n\in\ell^2$ such that
$$
\left(\frac{m_n}{\alpha_n}\Psi_n\otimes \Phi_n\right)_n
$$ is a Bessel sequence in $S^2(H).$ In particular, for some constant $C > 0,$
\begin{equation}\label{eq:remark}
\sum_{n=1}^\infty \left|\frac{m_n}{\alpha_n} \left<f, \Psi_n\right> \left<g, \Phi_n\right>\right|^2 \leq C \|f\|^2\cdot \|g\|^2
\end{equation} for every $f,g\in B_H.$ We now consider
$$
a_n^2:= \left|\frac{m_n}{\alpha_n}\right|^2 \int_{B_H}\left|\left<g, \Phi_n\right>\right|^2\ d\mu(g).
$$ After integrating in (\ref{eq:remark}) we obtain that $\left(a_n \Psi_n\right)_n$ is a Bessel sequence. Moreover, for $b_n = \frac{\overline{m}_n}{a_n}$ we have
$$
\sum_{n=1}^\infty\int_{B_H}\left|\left<f, b_n \Phi_n\right>\right|^2\ d\mu(f) = \sum_{n=1}^\infty \alpha_n^2 < \infty,
$$ from where the conclusion follows. $\Box$
\end{proof}

\end{document}